\documentclass{amsart}

\usepackage{amsmath, amssymb, amsthm}

\newtheorem{theorem}{Theorem}
\newtheorem{lemma}[theorem]{Lemma}
\newtheorem{proposition}[theorem]{Proposition}

\newtheorem{claim}[theorem]{Claim}
\newtheorem{subclaim}[theorem]{Subclaim}

\theoremstyle{definition}
\newtheorem{definition}[theorem]{Definition}
\newtheorem{remark}[theorem]{Remark}

\newcommand{\bb}{\mathbb}

\newcommand{\tp}{\mathrm{tp}}

\title{Disjoint type graphs with no short odd cycles}
\author{Chris Lambie-Hanson}
\address{Department of Mathematics and Applied Mathematics \\
Virginia Commonwealth University \\
Richmond, VA 23284 \\ United States}
\email{cblambiehanso@vcu.edu}

\begin{document}
\date{\today}
\begin{abstract}
	In this note, we provide a proof of a technical result of
	Erd\H{o}s and Hajnal about the existence of disjoint type graphs with no
	odd cycles. We also prove that this result is sharp in a certain sense.
\end{abstract}
\maketitle

The purpose of this note is to provide a proof of a result of Erd\"{o}s and
Hajnal about the existence of disjoint type graphs with no short odd cycles.
As far as we know, a proof of this result has never been published, though
forms of it are stated in a number of publications (cf.\
\cite[Theorem 7.4]{erdos_hajnal_chromatic_number} and
\cite[Lemma 1.1(d)]{erdos_hajnal_szemeredi}). If $\kappa$ is an uncountable cardinal,
then graphs of this form provide, again as far as we know,
the only known $\mathsf{ZFC}$ examples of graphs with size and chromatic number
$\kappa$ and arbitrarily high odd girth.

Before we state and prove the main result, we need some definitions and
conventions. First, if $n$ is a positive integer, we will sometimes
think of elements of $[\mathrm{Ord}]^n$ as strictly increasing sequences of
length $n$. So, for instance, if $a \in [\mathrm{Ord}]^n$ and $i < n$,
then $a(i)$ is the unique element $\alpha \in a$ such that $|a \cap \alpha| = i$.
All graphs considered here will be simple undirected graphs. If $G$ is a graph,
then $V(G)$ denotes its vertex set and $E(G)$ denotes its edge set.

\begin{definition}
	Let $n$ be a positive integer. A \emph{disjoint type of width $n$} is a
	function $t : 2n \rightarrow 2$ such that
	\[
		|t^{-1}(0)| = |t^{-1}(1)| = n.
	\]
	If $a,b \in [\mathrm{Ord}]^n$ are disjoint and $a \cup b$ is enumerated in
	increasing order as $\{\alpha_i \mid i < 2n\}$, then we say that the
	\emph{type of $a$ and $b$} is $t$, denoted $\tp(a,b) = t$, if
	\[
		a = \{\alpha_i \mid i \in t^{-1}(0)\}
	\]
	and
	\[
		b = \{\alpha_i \mid i \in t^{-1}(1)\}.
	\]
	Let $\hat{t}$ denote the disjoint type of width $n$ denoted by letting
	$\hat{t}(i) = 1 - t(i)$ for all $i < 2n$. It is evident that, if
	$a,b \in [\mathrm{Ord}]^n$ are disjoint and $\tp(a,b) = t$, then
	$\tp(b,a) = \hat{t}$.
\end{definition}

A type $t$ of width $n$ will sometimes be represented by a binary string
of length $2n$ in the obvious way.
We will particularly be interested in the following family of types.

\begin{definition}
	Let $1 \leq s < n < \omega$. Then $t^n_s$ is the disjoint type of width $n$
	whose binary sequence representation consists of $s$ copies of `0', followed
	by $n - s$ copies of `01', followed by $s$ copies of `1'. More formally,
	$t^n_s$ is defined by letting, for all
	$i < 2n$,
	\[
		t^n_s(i) =
		\begin{cases}
			0 & \text{if } i < s \\
			0 & \text{if } s \leq i < 2n - s \text{ and } i - s \text{ is even} \\
			1 & \text{if } s \leq i < 2n - s \text{ and } i - s \text{ is odd} \\
			1 & \text{if } i \geq 2n - s.
		\end{cases}
	\]
	For example, $t^5_2 = 0001010111$.
\end{definition}

\begin{definition}
	Suppose that $n$ is a positive integer, $\beta$ is an ordinal, and
	$t$ is a disjoint type of width $n$. The graph $G(\beta, t)$ is defined as
	follows. Its vertex set is $V(G(\beta, t)) = [\beta]^n$. Given
	$a,b \in [\beta]^n$, we put the edge $\{a,b\}$ into $E(G(\beta, t))$
	if and only if $a$ and $b$ are disjoint and $\tp(a,b) \in \{t, \hat{t}\}$.
\end{definition}

Before we get to our main result, we need a basic lemma. Given a function $f$
from a natural number to $\bb{Z}$, let $\max(f)$ and $\min(f)$ denote the
maximum and minimum values attained by $f$, respectively.

\begin{lemma} \label{discrepancy_lemma}
	Suppose that $k$ is a positive integer and $f:k \rightarrow \bb{Z}$ is a
	function such that
	\begin{itemize}
		\item $f(0) = 0$ and
		\item $|f(i+1) - f(i)| = 1$ for all $i < k$.
	\end{itemize}
	Then $\max(f) - \min(f) < k$.
\end{lemma}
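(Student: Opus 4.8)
The plan is to prove this by a direct telescoping estimate; no induction is required. Let $i_0, i_1 < k$ be indices at which $f$ attains its minimum and maximum values, respectively, so that $\min(f) = f(i_0)$ and $\max(f) = f(i_1)$. By replacing $f$ with $-f$ if necessary (which preserves both hypotheses and only swaps the roles of $\max$ and $\min$), I may assume that $i_0 \leq i_1$.

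Now the increment condition lets us telescope:
\[
	\max(f) - \min(f) = f(i_1) - f(i_0) = \sum_{i = i_0}^{i_1 - 1} \bigl( f(i+1) - f(i) \bigr).
\]
Since each term $f(i+1) - f(i)$ of this sum has absolute value exactly $1$, the triangle inequality gives $\max(f) - \min(f) \leq i_1 - i_0$. Finally $i_0 \geq 0$ and $i_1 \leq k - 1$, so $i_1 - i_0 \leq k - 1 < k$, which is the desired bound. (When $i_0 = i_1$ the sum is empty, giving $\max(f) - \min(f) = 0 < k$, so this case is harmless.)

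There is essentially no obstacle here: the lemma is elementary, and the only points requiring any care are the symmetry reduction to the case $i_0 \leq i_1$ and keeping the index arithmetic honest — in particular, reading the hypothesis ``$|f(i+1) - f(i)| = 1$ for all $i < k$'' as applying exactly to those $i$ for which $i+1$ still lies in the domain of $f$. As an alternative one could observe that, by discreteness, the set of values of $f$ must be an interval of integers $\{\min(f), \min(f)+1, \ldots, \max(f)\}$; since $f$ has only $k$ points in its domain, this interval has at most $k$ elements, giving $\max(f) - \min(f) \leq k-1$ directly. I would present the telescoping version, as it is the most self-contained.
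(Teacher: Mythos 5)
Your proof is correct. It does, however, take a genuinely different route from the paper: the paper proves the lemma by induction on $k$, extending the function one step at a time and tracking how the maximum and minimum can each move by at most one, whereas you avoid induction entirely by locating the indices $i_0, i_1$ where the extremes are attained and telescoping the difference $f(i_1) - f(i_0)$ into at most $i_1 - i_0 \leq k-1$ unit increments. Your argument is shorter and arguably cleaner; it also makes transparent that the hypothesis $f(0)=0$ is irrelevant to the conclusion (it is a mere normalization in both proofs), and that the bound $k-1$ is what is really being proved. The symmetry reduction to $i_0 \leq i_1$ via $f \mapsto -f$ is valid (negation preserves both hypotheses and swaps the roles of the two extremes), though you could dispense with it by bounding $|f(i_1) - f(i_0)|$ directly. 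Your parenthetical alternative --- that by discreteness the image of $f$ is an interval of integers of size at most $k$ --- is also a correct one-line proof. Finally, your remark about reading the increment hypothesis as applying only to those $i$ with $i+1$ in the domain is the right reading, and is in fact handled more carefully in your write-up than in the paper's own inductive proof, which refers to values such as $f(k)$ that lie outside the stated domain.
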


\begin{proof}
	The proof is by induction on $k$. If $k = 1$, then $\max(f) = \min(f) = f(0) = 0$.
	Suppose that $k > 0$ and we have proven the lemma for $k - 1$. Fix
	$f:k \rightarrow \bb{Z}$, and let $f^- = f \restriction (k-1)$.
	If $f(k) - f(k-1) = 1$, then we have $\max(f) \leq max(f^-) + 1$ and
	$\min(f) = \min(f^-)$, so, applying the induction hypothesis to $f^-$,
	we obtain
	\[
		\max(f) - \min(f) \leq 1 + \big(\max(f^-) - \min(f^-)\big) < 1 + (k-1) = k.
	\]
	If $f(k) - f(k-1) = -1$, then we have $\max(f) = \max(f^-)$ and
	$\min(f) \geq \min(f^-) - 1$, so
	\[
		\max(f) - \min(f) \leq \big(\max(f) - \min(f^-)\big) + 1 < (k-1) + 1 = k.
	\]
\end{proof}

We are now ready for the main result of this note. The proof is rather technical;
we recommend that the reader first draw some pictures to convince themselves of
the truth of the theorem in the special case $s = 1$, $n = 3$ (this pair does not satisfy
$n > 2s^2 + 3s +1$, but the conclusion of the theorem still holds). This will
help the reader to get a feel for the problem and motivate the calculations
in the proof. We also note that the lower bound of $2s^2 + 3s + 1$ is probably
not optimal and can likely be improved with a more careful analysis.
Since a precise lower bound for $n$ is not necessary for our desired applications
(cf.\ \cite{clh_finite_subgraphs}), the primary interest of the result for us
is the fact that such a lower bound exists at all.

\begin{theorem} \label{main_thm}
	Suppose that $s$ and $n$ are positive integers with $n > 2s^2 + 3s + 1$, and
	suppose that $\beta$ is an ordinal. Then the graph $G(\beta, t^n_s)$ has no
	odd cycles of length $2s + 1$ or shorter.
\end{theorem}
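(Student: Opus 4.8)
The plan is to argue by contradiction: suppose $a_0, a_1, \dots, a_{2s}, a_{2s+1} = a_0$ is an odd cycle of length $\ell \le 2s+1$ in $G(\beta, t^n_s)$, where consecutive $a_j$'s are disjoint and have type in $\{t^n_s, \widehat{t^n_s}\}$. The key is to attach to each vertex $a_j$ a single ordinal, some canonical ``position'' $\pi(a_j) \in \beta$ (for concreteness I would try $\pi(a_j) = a_j(s)$, the $s$-th element, or perhaps $\ssup(a_j)$ or $\min(a_j)$ — the special role of the first $s$ and last $s$ coordinates in $t^n_s$ suggests one of these), and then track how $\pi$ changes as we traverse the cycle. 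The structure of $t^n_s$ — $s$ zeros, then $n-s$ interleaved pairs, then $s$ ones — should force that whenever $\tp(a_j, a_{j+1}) = t^n_s$ then $\pi(a_{j+1})$ is ``well above'' $\pi(a_j)$ in a quantified sense, and whenever $\tp(a_j, a_{j+1}) = \widehat{t^n_s}$ the reverse holds; call an edge of the first kind an ``up'' edge and the second a ``down'' edge. Since the cycle returns to its start, the number of up edges equals... no — here is where the odd length bites. We will have to be more careful and count with multiplicities.

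The heart of the argument is a discrepancy/counting estimate, and this is where Lemma~\ref{discrepancy_lemma} enters. I would look at the $2n$ elements of $a_j \cup a_{j+1}$ and, reading left to right, record a $\pm 1$ step according to whether each successive element lies in $a_j$ or in $a_{j+1}$; the type $t^n_s$ (resp.\ $\widehat{t^n_s}$) makes this walk do something very specific — it drops by $s$, then oscillates, then rises by $s$ (resp.\ the mirror image) — so Lemma~\ref{discrepancy_lemma} controls how far apart the relevant order-statistics of $a_j$ and $a_{j+1}$ can be. Concretely, I expect to extract an inequality of the form: along an ``up'' edge, $a_{j+1}$ has at least $n - s$ of its elements lying above $\max(a_j \setminus (\text{top } s))$, or something of this flavor, with a slack governed by $s$; iterating around the whole cycle, the accumulated slack is at most something like $(2s+1)$ times a quantity linear in $s$, i.e.\ $O(s^2)$, while the ``gain'' that must be cancelled to close the cycle is at least $n$. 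The contradiction then comes precisely from $n > 2s^2 + 3s + 1$.

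The main obstacle, I expect, is choosing the right numerical invariant $\pi$ (or a small tuple of invariants) so that the interleaving middle block $01\cdots01$ of the type — which is ``neutral'' and can shuffle elements of $a_j$ and $a_{j+1}$ past each other freely — does not destroy the monotone behavior forced by the $s$ leading zeros and $s$ trailing ones. The reason short cycles rather than arbitrary cycles are tractable is that with only $2s+1$ edges the neutral block's accumulated ambiguity, bounded via the discrepancy lemma by roughly $s$ per edge, stays below $n$; a longer odd cycle would let the ambiguity grow without bound, which is consistent with the fact (to be shown in the sharpness half of the paper) that long odd cycles do exist. So concretely my steps would be: (1) fix the putative cycle and, for each edge, expand $\tp(a_j,a_{j+1})$ according to the definition of $t^n_s$; (2) define the $\pm 1$ walk on $a_j\cup a_{j+1}$ and apply Lemma~\ref{discrepancy_lemma} to bound the displacement of the order-statistics; (3) define $\pi$ and derive, per edge, a one-sided inequality relating $\pi(a_{j+1})$ to $\pi(a_j)$ with explicit $O(s)$ slack and a sign depending on up/down; (4) sum these $2s+1$ inequalities around the cycle, using that the signs cannot all cancel (parity) to get a net forced displacement of at least $n - O(s^2) > 0$ on the one hand and $\le 0$ on the other — the contradiction.
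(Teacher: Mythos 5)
Your high-level plan (classify edges as ``up'' or ``down,'' quantify the displacement per edge, and derive a contradiction from the odd length) matches the paper's strategy, but the three concrete mechanisms you propose are each off in a way that would prevent the argument from closing. First, a single fixed invariant $\pi(a_j)$ such as $a_j(s)$ or $\min(a_j)$ does not chain around the cycle: the type $t^n_s$ gives, for an up-edge, $a_{j+1}(i-s-1) < a_j(i) < a_{j+1}(i-s)$, so what an edge relates is coordinate $i$ of one vertex to coordinates $i-s-1$ and $i-s$ of the next. The index you must track therefore \emph{drifts} by roughly $s$ or $s+1$ per step (in opposite directions for up and down edges), and the comparison is inherently two-sided, with the window of candidate indices widening by exactly $1$ per edge. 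The paper's proof accordingly fixes one reference ordinal $a_0(i)$ and proves by induction along the path that $a_j\bigl(i - sf(j) - u_j\bigr) < a_0(i) < a_j\bigl(i - sf(j) + d_j\bigr)$, where $f(j) = u_j - d_j$; no single fixed coordinate of each $a_j$ carries enough information, since two small up-jumps can be cancelled by one large down-jump.

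Second, you apply Lemma~\ref{discrepancy_lemma} to the $\pm 1$ walk over the $2n$ merged elements of a single edge; but for the specific type $t^n_s$ that walk is completely explicit ($s$ steps one way, an oscillation, $s$ steps back) and the lemma tells you nothing new there. The lemma is instead applied to $f(j) = u_j(P) - d_j(P)$ along the \emph{path}, giving $\max f - \min f \le k \le 2s+1$; this is used to choose the starting index $i = M(s+1) \le 2s^2+3s+1$ so that every drifting index stays inside $[0,n)$ --- and that is the \emph{only} place the hypothesis $n > 2s^2+3s+1$ enters. Third, and consequently, your proposed contradiction (``net displacement at least $n - O(s^2)$ versus $\le 0$'') is not how the argument terminates: once the cycle closes, the two-sided window forces $ds - u(s+1) < 0 < d(s+1) - us$, i.e.\ $\tfrac{s}{s+1} < \tfrac{d}{u} < \tfrac{s+1}{s}$, which is impossible for nonnegative integers with $u + d$ odd and at most $2s+1$ (the closest such ratio to $1$ is $\tfrac{k}{k+1} \le \tfrac{s}{s+1}$ or its reciprocal). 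The contradiction is pure arithmetic in $u$, $d$, $s$; the size of $n$ plays no role in it. Without the drifting two-index window and this ratio argument, the proposal as written does not reach a contradiction.
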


\begin{proof}
	Let $t = t^n_s$, $V = [\beta]^n$, $E = E(G(\beta, t))$, and $G = G(\beta, t) =
	(V, E)$. We begin by making some preliminary observations. If
	$\{a,b\} \in E$, then either $\tp(a, b) = t$ or $\tp(a, b) = \hat{t}$.
	If $\tp(a, b) = t$, then, for all $i$ with $s < i < n$, we have
	\[
		b(i - s - 1) < a(i) < b(i - s).
	\]
	If $\tp(a,b) = \hat{t}$, then, for all $i < n - s - 1$, we have
	\[
		b(i + s) < a(i) < b(i + s + 1).
	\]
	Suppose that $k$ is a positive integer and $P = \langle a_0, \ldots, a_k \rangle$
	is a path of length $k$ in $G$. For $j \leq k$, let
	\[
		U_j(P) = \big\{i < j \mid \tp(a_i, a_{i+1}) = t\big\},
	\]
	and
	\[
		D_j(P) = \big\{i < j \mid \tp(a_i, a_{i+1}) = \hat{t}\big\}.
	\]
	Intuitively, $U_j(P)$ is the set of steps ``up" in the path among the first
	$j$ steps, and $D(P)$ is the set of steps ``down" among the first $j$ steps.
	Then set $u_j(P) = |U_j(P)|$ and $d_j(p) = |D_j(P)|$; note
	that $u_j(P) + d_j(P) = j$ for all $j \leq k$.

	\begin{claim} \label{bounding_claim}
		Suppose that $1 \leq k \leq 2s +1$ and $P = \langle a_0, \ldots, a_k \rangle$
		is a path in $G$. Let $u = u_k(P)$ and $d = d_k(P)$. Then there is
		$i < n$ such that
		\[
			a_k\big(i - u(s+1) + ds\big) < a_0\big(i\big) < a_k\big(i - us + d(s+1)\big).
		\]
	\end{claim}

	\begin{remark}
		Implicit in the statement of the claim is the assertion that
		\[
			0 \leq i - u(s+1) + ds < i - us + d(s+1) < n,
		\]
		the truth of which will follow readily from the proof.
	\end{remark}

	\begin{proof}[Proof of Claim \ref{bounding_claim}.]
		Define a function $f:k+1 \rightarrow \bb{Z}$ by letting
		$f(j) = u_j(P) - d_j(P)$ for every $j \leq k$. Then $f$ satisfies the
		hypotheses of Lemma \ref{discrepancy_lemma}, so, letting $M = \max(f)$ and
		$m = \min(f)$, we have $M - m \leq k \leq 2s + 1$.

		Let $i = M(s+1)$. Note that
		\[
			M(s+1) \leq (2s + 1)(s + 1) = 2s^2 + 3s + 1,
		\]
		so we certainly have $i < n$.

		\begin{subclaim} \label{inductive_subclaim}
			For every $0 < j \leq k$, we have
			\[
				a_j\big(i - sf(j) - u_j(P)\big) < a_0\big(i\big) < a_j\big(i - sf(j) +
				d_j(P)\big).
			\]
		\end{subclaim}

		\begin{remark}
			Implicit in the statement of this subclaim is the assertion that, for
			each $0 < j \leq k$, we have
			\[
				0 \leq i - sf(j) - u_j(P) < i - sf(j) + d_j(P) < n.
			\]
			This will follow readily from the proof.
		\end{remark}

		\begin{proof}[Proof of Subclaim \ref{inductive_subclaim}.]
			We proceed by induction on $j$. We begin by proving the subclaim for $j = 1$.
			Suppose first that $\tp(a_0, a_1) = t$, so $f(1) = 1$, $u_1(P) = 1$, and
			$d_1(P) = 0$. Then $M \geq 1$,
			so $i \geq s + 1$. Therefore, since $\tp(a_0, a_1) = t$, the preliminary
			observations at the beginning of the proof of the theorem imply that
			\[
				a_1(i - s - 1) < a_0(i) < a_1(i -s),
			\]
			as desired.

			If, on the other hand, $\tp(a_0, a_1) = \hat{t}$, and hence $f(1) = -1$, $u_1(P) =
			0$, and $d_1(P) = 1$, then $m \leq - 1$. Therefore, we have
			$M \leq 2s$, so $i = M(s+1) \leq 2s^2 + 2s < n - s - 1$. Therefore, since
			$\tp(a_0, a_1) = \hat{t}$, the preliminary observations at the beginning
			of the proof imply that
			\[
				a_1(i + s) < a_0(i) < a_1(i + s + 1),
			\]
			as desired.

			Now suppose that $0 < j < k$ and we have established that
			\[
				a_j\big(i - sf(j) - u_j(P)\big) < a_0\big(i\big) < a_j\big(i - sf(j) +
				d_j(P)\big).
			\]
			We will prove the corresponding statement for $j + 1$. Suppose to begin
			that $\tp(a_j, a_{j+1}) = t$, so $f(j+1) = f(j)+1$, $u_{j+1}(P) =
			u_j(P) + 1$, and $d_{j+1}(P) = d_j(P)$. In this case, it follows that
			$f(j) \leq (M-1)$ and $u_j(P) \leq (M-1)$. In particular, we have
			\[
				i - sf(j) -u_j(P) \geq M(s+1) - s(M-1) - (M-1) = s + 1 > s.
			\]
			Therefore, by the preliminary observations, we have
			\begin{align*}
				a_{j+1}\big(i - sf(j) - u_j(P) - s - 1\big) &< a_j\big(i - sf(j) - u_j(P)\big) \\
				a_{j+1}\big(i - sf(j+1) - u_{j+1}(P)\big) &< a_j\big(i - sf(j) - u_j(P)\big)
			\end{align*}
			and
			\begin{align*}
				a_j\big(i - sf(j) + d_j(P)\big) &< a_{j+1}\big(i - sf(j) + d_j(P) - s\big) \\
				a_j\big(i - sf(j) + d_j(P)\big) &< a_{j+1}\big(i - sf(j+1) + d_{j+1}(P)\big).
			\end{align*}
			Combining these inequalities with the inductive hypothesis yields
			\[
				a_{j+1}\big(i - sf(j+1) - u_{j+1}(P)\big) < a_0(i)
				< a_{j+1}\big(i - sf(j+1) + d_{j+1}(P)\big),
			\]
			as desired.

			On the other hand, suppose that $\tp(a_j, a_{j+1}) = \hat{t}$, so
			$f(j+1) = f(j) - 1$, $u_{j+1}(P) = u_j(P)$, and $d_{j+1}(P) = d_j(P) + 1$.
			In this case, it follows that $f(j) \geq (m + 1)$ and $d_j(P) \leq -(m+1)$.
			In particular, we have
			\[
				i - sf(j) + d_j(P) \leq i - s(m + 1) - (m+1) = i - (m+1)(s+1).
			\]
			We know that $M-m \leq 2s + 1$, so $m + 1 \geq M - 2s$. As a result, the
			above inequality becomes
			\[
				i - sf(j) + d_j(P) \leq M(s+1) - (M-2s)(s+1) = 2s^2 + 2s < n - s - 1.
			\]
			Therefore, by the preliminary observations, we have
			\begin{align*}
				a_{j+1}\big(i - sf(j) - u_j(P) + s\big) &< a_j\big(i - sf(j) - u_j(P)\big) \\
				a_{j+1}\big(i - sf(j+1) - u_{j+1}(P)\big) &< a_j\big(i - sf(j) - u_j(P)\big)
			\end{align*}
			and
			\begin{align*}
				a_j\big(i - sf(j) + d_j(P)\big) < a_{j+1}\big(i - sf(j) + d_j(P) + s + 1\big) \\
				a_j\big(i - sf(j) + d_j(P)\big) < a_{j+1}\big(i - sf(j+1) + d_{j+1}(P)\big).
			\end{align*}
			Combining these inequalities with the inductive hypothesis yields
			\[
				a_{j+1}\big(i - sf(j+1) - u_{j+1}(P)\big) < a_0\big(i\big) <
				a_{j+1}\big(i - sf(j+1) + d_{j+1}(P)\big),
			\]
			as desired, finishing the proof of the subclaim.
		\end{proof}
		Since $f(k) = u_k(P) - d_k(P)$, we have
		\[
			i - u(s+1) + ds = i - sf(k) - u_k(P) \text{\ \ \ and \ \ \ }
			i - us + d(s+1) = i - sf(k) + d_k(P).
		\]
		Therefore, the claim follows immediately from Subclaim \ref{inductive_subclaim}.
	\end{proof}

	Now suppose for sake of contradiction that $G$ has an odd cycle of length
	$2s + 1$ or shorter. In other words, there is a positive integer $k \leq s$
	and a path $C = \langle a_0, \ldots, a_{2k + 1} \rangle$ with
	$a_0 = a_{2k + 1}$. Let $u = u_k(C)$ and $d = d_k(C)$. Note that
	$u + d = 2k + 1$. Apply Claim \ref{bounding_claim} to find $i < n$
	such that
	\[
		a_{2k + 1}\big(i - u(s + 1) + ds\big) < a_0(i) < a_{2k + 1}\big(i - us +
		d(s + 1)\big).
	\]
	Since $a_0 = a_{2k + 1}$, this reduces to
	\[
		i - u(s + 1) + ds < i < i - us + d(s+1).
	\]
	Cancelling $i$ from all three terms yields
	\[
		ds - u(s + 1) < 0 < d(s+1) - us.
	\]
	Since $d$ and $u$ are both non-negative integers, this implies that they
	are both nonzero. Therefore,
	the left inequality gives us
	\[
		\frac{d}{u} < \frac{s+1}{s}
	\]
	and the right inequality gives us
	\[
		\frac{s}{s+1} < \frac{d}{u},
	\]
	so we have
	\[
		\frac{s}{s+1} < \frac{d}{u} < \frac{s+1}{s}.
	\]
	In particular, $\frac{d}{u}$ is close to $1$. But we know that $d + u = 2k + 1$;
	the assignments of values to $d$ and $u$ subject to this constraint that put $\frac{d}{u}$
	closest to 1 are either $d = k$ and $u = k + 1$ or vice versa.
	But $k \leq s$, so, if $d = k$ and $u = k + 1$, then
	\[
		\frac{d}{u} \leq \frac{s}{s+1}
	\]
	and, if $d = k + 1$ and $u = k$, then
	\[
		\frac{d}{u} \geq \frac{s+1}{s}.
	\]
	Either possibility gives us a contradiction, so we are done.
\end{proof}

We end this note by making a few further observations about these disjoint
type graphs. We first point out a minor error in the literature. In
\cite[Remark 1]{avart_luczak_rodl}, the authors write, using slightly
different terminology, that, for any positive integer $n \geq 3$, the graph
$G(\beta, t^n_1)$ has no odd cycles of length less than $2\lceil n/2 \rceil$.
This is true for $n = 3$ but false for every larger value of $n$;
$G(\beta, t^n_1)$ always has a cycle of length $5$, as long as
$\beta$ is large enough to allow room for the cycle. In fact,
we have the following general result, showing that Theorem \ref{main_thm}
is sharp in a sense.

\begin{proposition}
	Suppose that $0 < s < n < \omega$ and
	\[
		\beta > (n-1)(2s+3) + (2s+1)(2s+2).
	\]
	Then the graph $G(\beta, t^n_s)$ has a cycle of length $2s + 3$.
\end{proposition}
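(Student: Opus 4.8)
The plan is to build the required cycle explicitly, taking every one of its vertices to be an arithmetic progression of length $n$ with a fixed common difference $\gamma := 2s + 3$. For a natural number $\alpha$, let $A_\alpha := \{\alpha + l\gamma \mid l < n\}$; then $A_\alpha(l) = \alpha + l\gamma$ for each $l < n$, and $A_\alpha \in [\beta]^n$ whenever $\alpha + (n-1)\gamma < \beta$. The crux of the argument is the following reduction, which I would prove first. Writing $t = t^n_s$, if $\alpha$ and $\alpha'$ are natural numbers with $\gamma s < \alpha' - \alpha < \gamma(s+1)$, then $\tp(A_\alpha, A_{\alpha'}) = t$. Granting this, whenever $\gamma s < |\alpha' - \alpha| < \gamma(s+1)$ we get $\tp(A_\alpha, A_{\alpha'}) \in \{t, \hat{t}\}$, so $\{A_\alpha, A_{\alpha'}\}$ is an edge of $G(\beta, t)$ as long as both sets belong to $[\beta]^n$.

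To verify the reduction, one unwinds the definition of $t^n_s$: the assertion $\tp(A_\alpha, A_{\alpha'}) = t$ amounts to the single chain of $2n$ strict inequalities
\[
A_\alpha(0) < \cdots < A_\alpha(s) < A_{\alpha'}(0) < A_\alpha(s+1) < A_{\alpha'}(1) < \cdots < A_\alpha(n-1) < A_{\alpha'}(n-s-1) < A_{\alpha'}(n-s) < \cdots < A_{\alpha'}(n-1).
\]
The comparisons internal to $A_\alpha$ or to $A_{\alpha'}$ are automatic since $\gamma > 0$. Every remaining comparison has one of the two forms $A_\alpha(i) < A_{\alpha'}(i - s)$, which is equivalent to $\alpha' - \alpha > \gamma s$, or $A_{\alpha'}(i) < A_\alpha(i + s + 1)$, which is equivalent to $\alpha' - \alpha < \gamma(s+1)$; so all of them follow from the hypothesis.

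It remains to pick the starting points. Set $v := \gamma s + 1 = (s+1)(2s+1)$ and $w := \gamma(s+1) - 1 = (s+2)(2s+1)$; both lie strictly between $\gamma s$ and $\gamma(s+1)$, and $(s+2)v = (s+1)w$. Define $\alpha_0, \dots, \alpha_{2s+2}$ by the zigzag recursion $\alpha_0 = v$ and, for $0 \le j < 2s+2$, $\alpha_{j+1} = \alpha_j + v$ when $j$ is even and $\alpha_{j+1} = \alpha_j - w$ when $j$ is odd. A direct computation gives $\alpha_{2i} = (s+1-i)(2s+1)$ for $0 \le i \le s+1$ and $\alpha_{2i+1} = (2s+2-i)(2s+1)$ for $0 \le i \le s$; hence the $\alpha_j$ are exactly the $2s+3$ numbers $0, (2s+1), 2(2s+1), \dots, (2s+2)(2s+1)$ in some order, so they are pairwise distinct, they all lie in $[0, 2v]$, and $\max_j \alpha_j = 2v = (2s+1)(2s+2)$. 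Also $\alpha_{2s+2} = 0$, so the wrap-around step from $\alpha_{2s+2}$ to $\alpha_0 = v$ has size $v$ (continuing the zigzag pattern); equivalently, the $2s+3$ steps around the cycle are $s+2$ copies of $+v$ together with $s+1$ copies of $-w$, and their sum $(s+2)v - (s+1)w$ is $0$, so the walk closes up.

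Finally, set $a_j := A_{\alpha_j}$ for $j \le 2s+2$. Each $a_j$ is a vertex of $G(\beta, t)$ because $\alpha_j + (n-1)\gamma \le 2v + (n-1)\gamma = (2s+1)(2s+2) + (n-1)(2s+3) < \beta$ by hypothesis; the $a_j$ are pairwise distinct because the $\alpha_j$ are; and, since consecutive starting points (indices read modulo $2s+3$) differ by $\pm v$ or $\pm w$, the reduction makes each consecutive pair $\{a_j, a_{j+1}\}$ an edge. Therefore $\langle a_0, a_1, \dots, a_{2s+2}, a_0 \rangle$ is a cycle of length $2s+3$ in $G(\beta, t^n_s)$. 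I expect the only real difficulty to be the arithmetic in the third paragraph: the width of the construction must come out to be exactly $(2s+1)(2s+2)$, which is what forces the precise values $v = (s+1)(2s+1)$ and $w = (s+2)(2s+1)$ and the strictly alternating arrangement of up- and down-steps; a lopsided arrangement, such as taking all the $+v$ steps before all the $-w$ steps, would spread the $\alpha_j$ over an interval roughly twice as wide and would not yield the stated bound on $\beta$.
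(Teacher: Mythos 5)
Your proof is correct and follows essentially the same strategy as the paper: both arguments realize the cycle by arithmetic progressions of common difference $2s+3$ whose starting points are the multiples $0,(2s+1),\ldots,(2s+2)(2s+1)$ of $2s+1$, traversed in a zigzag whose steps lie strictly between $s(2s+3)$ and $(s+1)(2s+3)$. Your version merely packages the edge criterion as an explicit reduction and traverses the same set of starting points in the mirror-image order, so the two proofs coincide up to relabeling.
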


\begin{proof}
	Let $m = 2s + 3$. We will define a path $\langle a_0, a_1, \ldots,
	a_m \rangle$ in $G(\beta, t^n_s)$ with $a_m = a_0$.
	First define $a_0 = a_m$ by letting $a_m(i) = im$ for all
	$i < n$. The definition of each of the remaining elements of the cycle depends on
	the parity of its index. For $j$ with $0 < j \leq s + 1$, define
	$a_{2j - 1}$ by setting
	\[
		a_{2j -1}(i) = (i + s + j)m - (2j - 1)
	\]
	for all $i < n$, and define $a_{2j}$ by setting
	\[
		a_{2j}(i) = (i + j)m - 2j
	\]
	for all $i < n$. The following facts are easily verified and left to the reader.
	\begin{itemize}
		\item For all $j \leq s$, $\tp(a_{2j}, a_{2j + 1}) = t^n_s$.
		\item For all $j \leq s$, $\tp(a_{2j + 1}, a_{2j + 2}) = \hat{t}^n_s$.
		\item $\tp(a_{2s + 2}, a_m) = \hat{t}^n_s$.
		\item The largest element of any of the vertices in the cycle is
		\[
			a_{2s + 1}(n-1) = (n-1)(2s+3) + (2s+1)(2s+2).
		\]
	\end{itemize}
	Therefore, $\langle a_0, a_1, \ldots, a_m \rangle$ forms a cycle
	of length $2s + 3$ in $G(\beta, t^n_s)$.
\end{proof}

We conclude by noting the following result, which is one of the primary reasons
for interest in disjoint type graphs. The result is due to Erd\H{o}s and Hajnal
\cite{erdos_hajnal_chromatic_number}; the special case $t = t^3_1$ is due to
Erd\H{o}s and Rado \cite{erdos_rado}. A proof of the full result can be found in
\cite[Theorem 2.1]{avart_luczak_rodl}.

\begin{theorem}
	Suppose that $n$ is a positive integer and $t$ is a disjoint type of width
	$n$. For every infinite cardinal $\kappa$, the graph $G(\kappa, t)$ has
	chromatic number $\kappa$.
\end{theorem}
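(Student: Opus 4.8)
The bound $\chi(G(\kappa,t)) \le \kappa$ is immediate, since $|V(G(\kappa,t))| = |[\kappa]^n| = \kappa$. For the reverse inequality it suffices to prove that, for every cardinal $\mu < \kappa$ and every $c \colon [\kappa]^n \to \mu$, there are disjoint $a,b \in [\kappa]^n$ with $\tp(a,b) \in \{t,\hat t\}$ and $c(a) = c(b)$; call such a pair a \emph{monochromatic edge}. Exhibiting one shows $c$ is not a proper colouring, so $\chi(G(\kappa,t)) > \mu$ for every $\mu < \kappa$, i.e.\ $\chi(G(\kappa,t)) = \kappa$. Throughout we may replace $t$ by $\hat t$ freely, since $G(\kappa,t) = G(\kappa,\hat t)$.

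I would first dispose of the case that $\mu$ is finite. Restricting $c$ to $[\omega]^n$ (legitimate as $\omega \le \kappa$), apply the canonical Ramsey theorem of Erd\H{o}s and Rado: there are an infinite $H \subseteq \omega$ and a set of coordinates $I \subseteq \{0,\dots,n-1\}$ such that, for $x,y \in [H]^n$, we have $c(x) = c(y)$ if and only if $x(i) = y(i)$ for all $i \in I$. If $I$ were nonempty, then fixing all coordinates but one and letting that one range over the infinitely many elements of $H$ would force $c$ to take infinitely many values, contradicting $\mu < \omega$; hence $I = \emptyset$ and $c$ is constant on $[H]^n$. As $|H| = \aleph_0 \ge 2n$ we may pick disjoint $a,b \in [H]^n$ with $\tp(a,b) = t$, and $\{a,b\}$ is then a monochromatic edge. (With $n = 3$ and $t = t^3_1$, this is in essence the Erd\H{o}s--Rado argument mentioned above.)

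The substantive case is $\mu$ infinite, and here it is enough to treat $\kappa = \mu^+$, by restricting $c$ to $[\mu^+]^n$. I would induct on the width $n$. The case $n = 1$ is trivial: $G(\mu^+,t)$ is the complete graph on the $\mu^+$ singletons, and any two members of a colour class of size $\ge 2$ form a monochromatic edge. For $n \ge 2$, fix a large regular $\theta$ and a continuous $\subseteq$-increasing chain $\langle M_\alpha : \alpha < \mu^+\rangle$ of elementary submodels of $H_\theta$, each of size $\mu$, with $c, t \in M_0$, $\mu + 1 \subseteq M_0$, and $\langle M_\beta : \beta \le \alpha\rangle \in M_{\alpha+1}$ for all $\alpha$; then each $\delta_\alpha := M_\alpha \cap \mu^+$ is an ordinal and $C := \{\delta_\alpha : \alpha < \mu^+\}$ is a club in $\mu^+$. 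Guided by $C$, I would run a transfinite recursion that builds the $2n$ ordinals $\beta_0 < \cdots < \beta_{2n-1}$ of the sought edge one coordinate at a time, in the order prescribed by $t$, maintaining the invariant that the partial tuple constructed so far --- together with a designated colour --- still admits a completion to a monochromatic edge. Propagating this invariant calls on two ingredients: the inductive hypothesis for width $n-1$, applied to auxiliary colourings of $[\mu^+]^{n-1}$ that record the $c$-colour of an $(n-1)$-set together with one further, suitably chosen coordinate (the residual type being an arbitrary width-$(n-1)$ type, which is why the induction must quantify over all types of each width); and the elementarity of the $M_\alpha$, used to reflect the relevant existential statements below $\delta_\alpha$ and so to survive the limit stages of the recursion.

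I expect this last step --- the recursion --- to be the crux. The delicate part is to isolate an invariant that is at once (i) expressible in the models $M_\alpha$, (ii) genuinely passed from width $n$ to width $n-1$ by the auxiliary colouring, and (iii) preserved through limits of the recursion; reconciling these three demands is where the real work lies. Everything preceding it --- the upper bound, the reduction to successor cardinals, and the finite-$\mu$ argument via canonical Ramsey --- should be routine.
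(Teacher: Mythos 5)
Your upper bound, the reduction to $\kappa = \mu^+$, the disposal of finite $\mu$ (where the ordinary infinite Ramsey theorem already suffices; canonical Ramsey is overkill but harmless), and the base case $n=1$ are all correct. But for infinite $\mu$ and $n \ge 2$ --- which is the entire substance of the theorem --- you have not given a proof. You describe a strategy (a transfinite recursion along a chain of elementary submodels that builds the $2n$ ordinals of the desired edge while maintaining an invariant, with appeals to a width-$(n-1)$ inductive hypothesis via auxiliary colourings) and then state explicitly that isolating an invariant satisfying your conditions (i)--(iii) ``is where the real work lies.'' That real work \emph{is} the theorem; deferring it leaves a gap, not a proof. (For what it is worth, the note itself does not prove this theorem either --- it only cites the literature --- so there is no in-paper argument to match; but your proposal must stand on its own, and as written it does not.)

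Two concrete places where the sketch, as stated, is in danger. First, the auxiliary colouring you propose ``records the $c$-colour of an $(n-1)$-set together with one further, suitably chosen coordinate.'' If that extra coordinate is allowed to range over $\mu^+$ many ordinals, the auxiliary colouring uses $\mu^+$ colours and the width-$(n-1)$ inductive hypothesis, which requires strictly fewer than $\mu^+$ colours, does not apply; confining the coordinate to a set of size at most $\mu$ while still being able to assemble a genuine pair of type $t$ at the end is exactly the crux you have skipped. Second, the invariant ``the partial tuple, together with a designated colour, still admits a completion to a monochromatic edge'' is not obviously preserved when you commit to one more ordinal: you would need that a single colour works for cofinally or stationarily many of the admissible next coordinates (in a form reflected by the models $M_\alpha$), and nothing in the proposal supplies this. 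It is also unclear that building the ordinals $\beta_0 < \cdots < \beta_{2n-1}$ in increasing order, as prescribed by $t$, is the right order of construction. Until the invariant is written down and its propagation through successor and limit stages is verified, the main case of the theorem remains unproved.
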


\bibliographystyle{plain}
\bibliography{bib}

\begin{thebibliography}{1}

\bibitem{avart_luczak_rodl}
Christian Avart, Tomasz \L{}uczak, and Vojt\v{e}ch R\"{o}dl.
\newblock On generalized shift graphs.
\newblock {\em Fund. Math.}, 226(2):173--199, 2014.

\bibitem{erdos_hajnal_chromatic_number}
P.~Erd\H{o}s and A.~Hajnal.
\newblock On chromatic number of graphs and set-systems.
\newblock {\em Acta Math. Acad. Sci. Hungar}, 17:61--99, 1966.

\bibitem{erdos_hajnal_szemeredi}
P.~Erd\H{o}s, A.~Hajnal, and E.~Szemer\'{e}di.
\newblock On almost bipartite large chromatic graphs.
\newblock In {\em Theory and practice of combinatorics}, volume~60 of {\em
  North-Holland Math. Stud.}, pages 117--123. North-Holland, Amsterdam, 1982.

\bibitem{erdos_rado}
P.~Erd\H{o}s and R.~Rado.
\newblock A construction of graphs without triangles having pre-assigned order
  and chromatic number.
\newblock {\em J. Lond. Math. Soc.}, s1-35(4):445--448, 1960.

\bibitem{clh_finite_subgraphs}
Chris Lambie-Hanson.
\newblock On the growth rate of chromatic numbers of finite subgraphs.
\newblock {\em Adv. Math.}, 2020.
\newblock To appear.

\end{thebibliography}

\end{document}